\title{On the codegree density of complete 3-graphs and related problems}
\author{Victor Falgas--Ravry
\thanks{Institutionen f\"or matematik och matematisk
statistik, Ume{\aa}  Universitet, 901 87 Ume{\aa}, Sweden. Supported by a
postdoctoral grant from the Kempe foundation. Email: {\tt
victor.falgas-ravry@math.umu.se}}
}
\theoremstyle{plain}
\newtheorem{theorem}{Theorem}
\newtheorem{corollary}[theorem]{Corollary}
\newtheorem{proposition}[theorem]{Proposition}
\newtheorem*{claim}{Claim}
\newtheorem{construction}[theorem]{Construction}
\theoremstyle{definition}
\newtheorem{definition}{Definition}
\newtheorem{conjecture}{Conjecture}
\newtheorem{question}[conjecture]{Question}
\newtheorem{problem}[conjecture]{Problem}
\theoremstyle{remark}
\newtheorem{remark}[theorem]{Remark}
\begin{document}
\maketitle

\begin{abstract}
Given a family of 3-graphs $\mathcal{F}$, its codegree threshold $\textrm{coex}(n, \mathcal{F})$ is the largest number $d=d(n)$ such that there exists an n-vertex 3-graph in which every pair of vertices is contained in at least $d$ 3-edges but which contains no member of $\mathcal{F}$ as a subgraph. The codegree density of $\mathcal{F}$ is the limit 
\[\gamma(\mathcal{F})=\lim_{n\rightarrow \infty} \frac{\textrm{coex}(n,\mathcal{F})}{n-2}.\]

In this paper we generalise a construction of Czygrinow and Nagle to bound below the codegree density of complete 3-graphs: for all integers $s\geq 4$, the codegree density of the complete 3-graph on $s$ vertices $K_s$ satisfies
\[\gamma(K_s)\geq 1-\frac{1}{s-2}.\]
We also provide constructions based on Steiner triple systems which show that if this lower bound is sharp, then we do not have stability in general.

In addition we prove bounds on the codegree density for two other infinite families of 3-graphs.
\end{abstract}
\section{Introduction}
In this paper, we study codegree density for various families of 3-graphs.

\subsection{Notation and definitions}
Given a set $A$ and an integer $r$, write $A^{(r)}$ for the collection of $r$-subsets of $A$. Also, for $n \in \mathbb{N}$ write $[n]$ for $\{1,2,\ldots m\}$.

A \emph{3-graph} is a pair $G=(V,E)$, where $V=V(G)$ is a set of vertices and $E=E(G)\subseteq V^{(3)}$ is a set of 3-edges. A \emph{subgraph} of $G$ is a 3-graph $H$ with $V(H) \subseteq V(G)$ and $E(H)\subseteq E(G)$. The \emph{codegree} $d(x,y)$ of vertices $x,y \in V(G)$ is the number of 3-edges of $G$ containing the pair $\{x,y\}$. The \emph{minimum codegree} of $G$ is $\delta_2(G)=\min_{(x,y) \in V^{(2)}}d(x,y)$.

We shall also consider some \emph{2-graphs}, or ordinary graphs, which are pairs $G=(V,E)$, with $E$ now a set of (2-)edges, $E\subseteq V^{(2)}$.

We recall the classical definition of the Tur\'an number and Tur\'an density of a family of 3-graphs.
\begin{definition}
Let $n \in \mathbb{N}$, and let $\mathcal{F}$ be a family of non-empty 3-graphs. The \emph{Tur\'an number} $\textrm{ex}(n, \mathcal{F})$ of $\mathcal{F}$  is the largest number $e=e(n)$ such that there exists an $n$-vertex $3$-graph with at least $e$ 3-edges and no member of $\mathcal{F}$ as a subgraph.
\end{definition}
An easy averaging argument shows that the sequence $\textrm{ex}(n, \mathcal{F})/\binom{n}{3}$ is monotone decreasing in $[0,1]$, and hence converges to a limit, known as the Tur\'an density.
\begin{definition}
The \emph{Tur\'an density} of a family of non-empty 3-graphs $\mathcal{F}$ is the limit
\[\pi(\mathcal{F})=\lim_{n \rightarrow \infty} \frac{\textrm{ex}(n, \mathcal{F})}{\binom{n}{3}}.\]
\end{definition}
The Tur\'an density may be thought of as the asymptotically maximal proportion of 3-edges which may be present in an $\mathcal{F}$-free 3-graph, and is one of the central objects of study in extremal hypergraph theory. In this paper, we are interested in another limit density, namely codegree density.

\begin{definition}
Let $n \in \mathbb{N}$ and let $\mathcal{F}$ be a family of non-empty 3-graphs. The \emph{codegree threshold} of $\mathcal{F}$ $\textrm{coex}(n, \mathcal{F})$  is the largest number $d=d(n)$ such that there exists an $n$-vertex $3$-graph in which every pair of vertices is contained in at least $d$  3-edges but which contains no member of $\mathcal{F}$ as a subgraph.
\end{definition}

Mubayi and Zhao~\cite{MubayiZhao07} showed that for any $\mathcal{F}$, the sequence $\textrm{coex}(n,\mathcal{F})/(n-2)$ tends to a limit as $n \rightarrow \infty$. (Note that this sequence is not monotone, as shown by Lo and Markstr\"om~\cite{LoMarkstrom12}, so that the existence of a limit is not trivial.) This allows us to define the codegree density of a family of $3$-graphs.

\begin{definition}
The \emph{codegree density} of a family of non-empty $3$-graphs $\mathcal{F}$ is defined to be the limit
\[\gamma(\mathcal{F})=\lim_{n \rightarrow \infty}  \frac{\textrm{coex}(n,\mathcal{F})}{n-2}.\]
\end{definition}

\subsection{History}
Codegree density was first studied by Mubayi~\cite{Mubayi05}, who determined it for the Fano plane $F_7$. Keevash~\cite{Keevash09} used hypergraph regularity to show $\textrm{coex}(n, F_7)=\lfloor n/2 \rfloor$ for $n$ sufficiently large, with the unique extremal configuration a complete balanced bipartite 3-graph. De Biasio and Jiang \cite{DeBiasioJiang12} later gave another proof of this fact avoiding the use of hypergraph regularity.

Mubayi and Zhao~\cite{MubayiZhao07} showed that codegree densities are well-defined and studied various properties of $\gamma$. In particular they showed that for every $c \in [0,1]$ there exists a family of 3-graphs $\mathcal{F}$ with $\gamma(\mathcal{F})=c$ (so that codegree density does not `jump'), and that the `supersaturation' phenomenon familiar from extremal graph theory also occurs for codegree density. (See~\cite{MubayiZhao07} for details and definitions.)

Marchant, Pikhurko, Vaughan and the author~\cite{falgasravry+marchant+pikhurko+vaughan:arxiv} determined the codegree threshold of $F_{3,2}=([5], \{123,124,125,345\})$, while Pikhurko,Vaughan and the author determined the codegree density of $K_4^-=([4], \{123,124,134\})$, resolving a conjecture of Nagle~\cite{Nagle99}.

Nagle~\cite{Nagle99} and Czygrinow and Nagle~\cite{CzygrinowNagle01} have in addition conjectured that $\gamma(K_4)=1/2$, where $K_4$ denotes the complete 3-graph on 4 vertices. We describe their lower-bound construction below.
\begin{construction}[Czygrinow and Nagle's construction]
Let $n \in \mathbb{N}$. Let $T$ be a tournament (an orientation of the edges of the complete 2-graph) on $[n]$. 
We define a 3-graph $G_T$ on $[n]$ by setting $ijk$ with $i<j<k$ to be a 3-edge of $G_T$ if the ordered pairs $(i,j)$ and $(i,k)$ receive opposite orientations in $T$.
\end{construction} 
It is easy to check that $G_T$ has no $K_4$ subgraph and that by choosing $T$ uniformly at random we obtain a 3-graph that with high probability has minimum codegree at least $n/2-o(n)$. Czygrinow and Nagle conjectured that this was asymptotically best possible, in other words that $\gamma(K_4)=1/2$.

No other codegree densities are known or conjectured, and like Tur\'an's famous conjecture that $\pi(K_4)=5/9$, the Czygrinow--Nagle conjecture remains wide open. We refer a reader to Keevash's recent survey~\cite{Keevash11} for a more complete discussion of Tur\'an-type problems for 3-graphs.

\subsection{Contribution of this paper}
In this note we give a general construction showing that 
\[\gamma(K_s)\geq 1- \frac{1}{s-2}\]
for all $s\geq 4$, where $K_s$ denotes the complete 3-graph on $s$ vertices, $K_s=([s], [s]^{(3)})$. Our construction is a generalization of the Czygrinow--Nagle construction based on random edge-colourings of the complete 2-graph.

In addition, for $s$ congruent to $1$ or $5$ modulo $6$, we give different non-isomorphic constructions giving the same lower bound on $\gamma(K_s)$. These are based on Steiner triple systems, and imply that if our lower-bound is tight (as we believe) then the codegree density problem for complete 3-graphs is not stable in general: several very different near-extremal configurations exist. This mirrors the conjectured behaviour of Tur\'an density for complete 3-graphs (see~\cite{Keevash11,Sidorenko95}). In the particular case $s=6$, we are also able to give an alternative random construction based on Ramsey numbers showing $\gamma(K_6)\geq 3/4 $.

Finally we also give bounds on the codegree and Tur\'an densities of two other families of 3-graphs and present a number of open problems.

Our paper is structured as follows. In Section 2, we prove lower-bound on the codegree density of complete 3-graphs. In Section 3, we turn our attention to 3-graphs of the form $\left([t]\sqcup \{x_{\star}\}, \{ijx_{\star}: \ 1\leq i<j \leq t \}\right)$ (these correspond to complete 2-graphs on $t$ vertices in the links of the vertices --- see Section 3 for a formal definition) and give general bounds for both their codegree density and their Tur\'an density. Finally in Section 4 we introduce co-spanned 3-graphs, and give bounds on their codegree density.

\section{Complete 3-graphs}
Let $n \in \mathbb{N}$ and $[n]=\{1,2 , \ldots n\}$.
\begin{construction}[Colouring construction]\label{colourconstr}
Let $c: [n]^{(2)}\rightarrow [s]$ be a colouring of the edges of the complete 2-graph on $[n]$ with $s$ colours. We construct a 3-graph $G_c$ based on this colouring in the following manner: for every triple $i,j,k\in [n]$ with $i<j<k$, we add the 3-edge $ijk$ to $E(G_c)$ if and only if $c(ij)\neq c(ik)$.
\end{construction}
\begin{remark}
This may naturally be viewed as a generalisation of the Czygrinow--Nagle construction: we may obtain a 2-colouring of $[n]^{(2)}$ from a tournament $T$ on $[n]$ by setting $c(ij)=1$ for $i<j$ if $ij$ is oriented from $i$ to $j$ in $T$, and $c(ij)=2$ if instead $ij$ is oriented from $j$ to $i$.
\end{remark}

\begin{proposition}\label{constructionworks}
For any colouring $c$ as above, the 3-graph $G_c$ is $K_{s+2}$-free.
\end{proposition}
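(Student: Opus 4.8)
The plan is to argue by contradiction, exploiting the asymmetric role the smallest vertex of a triple plays in Construction~\ref{colourconstr}. Suppose $G_c$ contained a copy of $K_{s+2}$ on a vertex set $W\subseteq [n]$ with $|W|=s+2$, and let $v=\min W$. Every triple of vertices of $W$ spans a $3$-edge of $G_c$; in particular, for any two vertices $x,y\in W\setminus\{v\}$ with $x<y$, the triple $vxy$ (whose smallest element is $v$) is a $3$-edge, which by the definition of $G_c$ means $c(vx)\neq c(vy)$.

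Hence the assignment $w\mapsto c(vw)$ is an injection from $W\setminus\{v\}$ into $[s]$: the $s+1$ edges incident to $v$ inside $W$ all receive distinct colours. Since only $s$ colours are available, this is impossible, and so $G_c$ contains no $K_{s+2}$, as claimed.

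There is no real obstacle here: the entire content is a pigeonhole count on the star centred at $\min W$, and the only point requiring care is to check that the $3$-edge condition $c(ij)\neq c(ik)$ in Construction~\ref{colourconstr} compares precisely the two edges of $W$ leaving $v$, which it does, since those two edges are $vx$ and $vy$ for $v$ the common smallest vertex of the triple. Indeed, the argument yields slightly more: in any clique of $G_c$, the edges joining the smallest vertex of the clique to the remaining vertices must all be coloured differently, so a clique on $t$ vertices forces $t-1$ distinct colours to appear in the star at its minimum vertex.
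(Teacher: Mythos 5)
Your proof is correct and takes essentially the same approach as the paper: both arguments exploit the asymmetric role of the minimum vertex of the would-be clique, observe that the presence of all triples through it forces the $s+1$ edges of the star at that vertex to receive pairwise distinct colours, and conclude by pigeonhole. Your presentation is a bit more streamlined than the paper's, which tracks the constraints one vertex at a time and exhibits an explicit missing $3$-edge ($i_1 i_{s+1} i_{s+2}$), but the underlying idea is identical.
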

\begin{proof}
Let $i_1, i_2, \ldots i_{s+2}$ be a set of $s+2$ distinct vertices from $[n]$ with $i_1<i_2\ldots <i_{s+2}$.

\begin{itemize}
\item Suppose all 3-edges of the form $i_1i_2i_j$ with $2<j\leq s+2$ are in $G_c$. Then we must have that $c(i_1i_j) \in [s]\setminus\{c(i_1i_2)\}$ for all $j$ with $2<j\leq s+2$.

\item Suppose in addition all 3-edges of the form $i_1i_3i_j$ with $3<j \leq s+2$ are in $G_c$. Then we must have that $c_{i_1i_j} \in [s] \setminus \{c(i_1i_2), c(i_1i_3)\}$ for all $j$ with $3<j\leq s+2$.

\item Repeating the argument by supposing all 3-edges of the form $i_1i_4i_j$ ($4<j\leq s+2)$, $i_1i_5i_j$ ($5<j\leq s+2$), ... , $i_1i_{s}i_j$ ($s<j \leq s+2$) are in $G_c$, we have that both of $c(i_1i_{s+1})$ and $c(i_1 i_{s+2})$ lie in the set
\[S=[s] \setminus \{c(i_1i_j)\vert 2\leq j \leq s\}.\]
\end{itemize}
Since $c(i_1i_j)\neq c(i_1i_{j'})$ for all $j,j'$ with $2\leq j<j'\leq s$, we have that $S$ has size one. Thus $c(i_1i_{s+1})=c(i_1i_{s+2})$, and the 3-edge $i_1i_{s+1}i_{s+2}$ is missing from $G_c$.

It follows that whatever colouring $c$ we originally chose, at least one of the $3$-edges $i_1i_ji_{j'}$ with $2\leq j <j'\leq s+2$ is missing from $G_c$. In other words, $i_1, i_2 \ldots i_{s+2}$ cannot span a complete $3$-graph in $G_c$, which is therefore $K_{s+2}$-free as claimed.
\end{proof}

\begin{theorem}\label{codegreebound}
For all integers $s\geq 2$,
\[\gamma(K_{s+2})\geq 1- \frac{1}{s}.\]
\end{theorem}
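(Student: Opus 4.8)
The statement has two halves: the 3-graph $G_c$ must be $K_{s+2}$-free, which is precisely Proposition~\ref{constructionworks} and so already available, and for a suitable colouring $c$ it must have minimum codegree close to $\left(1-\tfrac1s\right)(n-2)$. So the plan is to show that a \emph{uniformly random} $s$-colouring $c$ of $[n]^{(2)}$ does the latter with high probability. Combined with Proposition~\ref{constructionworks} this yields $\textrm{coex}(n,K_{s+2})\geq\left(1-\tfrac1s\right)(n-2)-o(n)$, and dividing by $n-2$ and letting $n\to\infty$ gives the theorem. Note that for $s=2$ this recovers exactly the Czygrinow--Nagle construction and the bound $\gamma(K_4)\geq 1/2$.

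First I would pin down the codegree of a pair in $G_c$. Fix $x<y$ in $[n]$ and a third vertex $z$. In the triple $\{x,y,z\}$ the smallest element is $x$ when $z>x$ and is $z$ when $z<x$; unwinding the definition of $G_c$, the 3-edge $\{x,y,z\}$ is present precisely when $c(xz)\neq c(xy)$ in the first case and when $c(zx)\neq c(zy)$ in the second (the sub-cases $x<z<y$ and $z>y$ both fall under the first case and give the same condition). Hence
\[ d(x,y)=\bigl|\{z>x,\ z\neq y:\ c(xz)\neq c(xy)\}\bigr|+\bigl|\{z<x:\ c(zx)\neq c(zy)\}\bigr|, \]
a sum of exactly $n-2$ indicator random variables, one per vertex $z\neq x,y$. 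Each indicator is $1$ with probability $1-\tfrac1s$, since two independent uniform colours disagree with that probability, so $\mathbb{E}\,d(x,y)=\left(1-\tfrac1s\right)(n-2)$ for every pair $\{x,y\}$, irrespective of where $x,y$ lie in the order on $[n]$.

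Next comes concentration. After conditioning on the colour $c(xy)$ --- whose value is immaterial by the symmetry of the colour set --- the $n-2$ indicators above become mutually independent Bernoulli$\left(1-\tfrac1s\right)$ variables: for $z>x$ the $z$-indicator then depends only on the distinct edges $xz$, and for $z<x$ on the pairwise-disjoint edge pairs $\{zx,zy\}$, none of which is $xy$. A Chernoff/Hoeffding bound gives $\mathbb{P}\bigl[d(x,y)<\left(1-\tfrac1s\right)(n-2)-n^{2/3}\bigr]\leq e^{-\Omega(n^{1/3})}$, and a union bound over the fewer than $n^2$ pairs shows that with high probability $\delta_2(G_c)\geq\left(1-\tfrac1s\right)(n-2)-n^{2/3}$. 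For any such $c$, the 3-graph $G_c$ is $K_{s+2}$-free by Proposition~\ref{constructionworks} and witnesses $\textrm{coex}(n,K_{s+2})\geq\left(1-\tfrac1s\right)(n-2)-n^{2/3}$, which is all we need.

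There is no deep obstacle here. The two points that require care are (i) the case analysis on the position of $z$ relative to $x$ and $y$, which is what produces the clean additive formula for $d(x,y)$, and (ii) the conditional-independence step underpinning the Chernoff estimate: resampling the single colour $c(xy)$ can by itself change $d(x,y)$ by $\Theta(n)$, so a bounded-differences inequality applied directly to the whole colouring would be useless, whereas conditioning on $c(xy)$ first restores independence of the $n-2$ summands and sidesteps the issue.
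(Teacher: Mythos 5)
Your proof is correct and follows essentially the same route as the paper: a uniformly random $s$-colouring, Proposition~\ref{constructionworks} for $K_{s+2}$-freeness, and a Chernoff-plus-union-bound argument after observing that, conditionally on $c(xy)$, the $n-2$ indicators contributing to $d(x,y)$ are independent Bernoulli$\left(1-\tfrac1s\right)$ variables. The paper phrases the independence slightly differently (it proves the $X_{ij,k}$ are \emph{unconditionally} independent via the colour-symmetry argument), but the substance --- conditioning on $c(xy)$ to decouple the summands, then concentration --- is the same, and your explicit case analysis on the position of $z$ is a sound unwinding of what the paper leaves implicit.
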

\begin{proof}
Let $n$ be sufficiently large. Independently colour each pair from $[n]$ with an element of $[s]$ chosen uniformly at random, and let $\mathbf{c}$ denote the random colouring thus obtained.

Consider now the $3$-graph $G_{\mathbf{c}}$. By Proposition~\ref{constructionworks}, we know it is $K_{s+2}$-free. We show that with high probability it has minimum codegree 
\[\delta_2(G_{\mathbf{c}})=\left(1-\frac{1}{s}\right) n +o(n).\]

For each pair $ij \in [n]^{(2)}$ with $i<j$, let $X_{ij}$ be the random variable denoting the codegree of $i,j$ in $G_{\mathbf{c}}$. Further for every $k \in[n]\setminus\{ij\}$, let $X_{ij,k}$ be the Bernoulli random variable taking the value $1$ if $ijk \in E(G_{\mathbf{c}})$ and $0$ otherwise.

\begin{claim}
Fix $i<j$. Then $\{X_{ij,k}:\ k \in [n]\setminus\{ij\} \}$ forms a family of independent identically distributed Bernoulli random variables with parameter $1-\frac{1}{s}$.
\end{claim}
\begin{proof}
Let $K\sqcup K'$ be a partitition of $[n]\setminus\{ij\}$. Then, 
\begin{align*}
\mathbb{P}&\left(X_{ij,k}= 1\  \forall k \in K, \ X_{ij, k'}=0\  \forall k' \in K'\right)\\
&=\sum_{c=1}^s \mathbb{P}(\mathbf{c}(ij)=c) \mathbb{P}\left(X_{ij,k}= 1\  \forall k \in K, \ X_{ij, k'}=0\  \forall k' \in K'\right \vert \mathbf{c}(ij)=c)\\
&=\sum_{c=1}^s \frac{1}{s} \prod_{k\in K} \mathbb{P}(X_{ij,k}= 1\vert \mathbf{c}(ij)=c)\prod_{k'\in K'} \mathbb{P}(X_{ij,k'}= 0\vert \mathbf{c}(ij)=c)\\
&=\prod_{k\in K} \mathbb{P}(X_{ij,k}= 1\vert \mathbf{c}(ij)=1)\prod_{k'\in K'} \mathbb{P}(X_{ij,k'}= 0\vert \mathbf{c}(ij)=1).
\end{align*}
Here in the second equality above we used the fact that conditional on the value of $\mathbf{c}(ij)$, the random variable $X_{ij,k}$ depends only on the random variables $\mathbf{c}(ik), \mathbf{c}(jk)$. Since each edge is coloured independently, we have that the conditional random variables $X_{ij,k}\vert \mathbf{c}(ij)=c$ are independent. In the third equality, we use the fact that the problem is symmetric with respect to our $s$ colours.

Now for any $k\in K$, 
\begin{align*}
\mathbb{P}(X_{ij,k}=1)&=\sum_{c=1}^s \mathbb{P}(\mathbf{c}(ij)=c) \mathbb{P}(X_{ij,k}=1\vert \mathbf{c}(ij)=c)\\
&= s \times \frac{1}{s} \mathbb{P}(X_{ij,k}=1\vert \mathbf{c}(ij)=1),
\end{align*}
again using the symmetry in the colours, and similarly for any $k'\in K'$
\[\mathbb{P}(X_{ij,k'}=0)=\mathbb{P}(X_{ij,k'}=0\vert \mathbf{c}(ij)=1).\]
Thus we have 
\[\mathbb{P}\left(X_{ij,k}= 1\  \forall k \in K, \ X_{ij, k'}=0\  \forall k' \in K'\right)=\prod_{k\in K} \mathbb{P}(X_{ij,k}= 1)\prod_{k'\in K'} \mathbb{P}(X_{ij,k'}= 0),\]
for any $K\sqcup K'=[n]\setminus\{ij\}$, proving that $\{X_{ij, k}: k \in [n]\setminus\{ij\}\}$ forms a family of independent random variables as claimed. By construction, they are identically distributed Bernoulli random variables with parameter $(1-\frac{1}{s})$, completing our claim.
\end{proof}

We can now apply a standard Chernoff bound. Fix $\varepsilon>0$.
\begin{align*}
\mathbb{P}\left(\delta_2(G_{\mathbf{c}})\leq (1-\frac{1}{s}-\varepsilon)n\right)&\leq \binom{n}{2} \mathbb{P}\left(\textrm{d}_{G_{\mathbf{c}}}(1,2)\leq(1-\frac{1}{s}-\varepsilon)n\right)\\
&\leq n^2 e^{-\frac{\varepsilon^2}{2} n}\\
&=o(1).
\end{align*}
Thus for a typical colouring $\mathbf{c}$, the minimum codegree of $G_{\mathbf{c}}$ is at least $(1-\frac{1}{s}-\varepsilon)n$. Since $\varepsilon>0$ was arbitrary, it follows that 
\[\gamma(K_{s+2})\geq 1-\frac{1}{s},\]
as claimed.

\end{proof}

Other constructions are possible. For example, for $s$ congruent to $3$ or $5$ modulo $3$, we have rather different, structured constructions in addition to the random constructions arising from Construction~\ref{colourconstr}. Thus if our lower-bound on the codegree density of $K_{s+2}$ is tight then in general we do not have stability for the codegree densities of complete 3-graphs.

\begin{construction}[A Steiner triple system construction]
Let $s\geq 5$ be an integer congruent to $3$ or $5$ modulo $6$. Let $S$ be a Steiner triple system on $[s-2]$ --- that is, a 3-graph on $[s-2]$ such that every pair of vertices is contained in exactly one 3-edge. Such systems are known to always exist, subject to the aforementioned modulo 6 condition~\cite{Kirkman1847}.

Given $n\in \mathbb{N}$, let $\sqcup_{i=1}^{s-2}V_i$ be a balanced $(s-2)$-partition of $[n]$. We define a 3-graph $G_S$ on the vertex set $[n]$ by taking the following triples to form the 3-edge set:
\begin{itemize}
\item all triples of the form $V_iV_iV_j$ for distinct $i,j \in [s-2]$
\item all triples of the form $V_{i}V_{j}V_{k}$ for distinct $i,j,k \in [s-2]$ such that $ijk$ does not belong to $S$.
\end{itemize}
\end{construction}

\begin{proposition}
For any $n\in\mathbb{N}$, $s$ congruent to $3$ or $5$ modulo $6$ and any Steiner triple system $S$ on $[s-2]$, the 3-graph $G_S$ is $K_{s}$-free and has minimum codegree $(1-\frac{1}{s-2})n +O(1)$.
\end{proposition}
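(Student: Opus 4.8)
The plan is to verify the two assertions in turn. For the minimum codegree, I would fix a pair of vertices $\{x,y\}$ and count the vertices $z$ for which $xyz \in E(G_S)$. If $x,y$ both lie in the same part $V_i$, then $xyz$ is an edge precisely when $z \notin V_i$ (it is then of type $V_iV_iV_j$ with $i\neq j$), while a $z \in V_i$ would give the non-edge type $V_iV_iV_i$; so the codegree is $n - |V_i|$. If instead $x \in V_i$ and $y \in V_j$ with $i\neq j$, then $xyz$ is an edge whenever $z \in V_i\cup V_j$ (types $V_iV_iV_j$ and $V_iV_jV_j$), and for $z$ in a third part $V_k$ it is an edge if and only if $ijk \notin S$; since $S$ is a Steiner triple system, the pair $\{i,j\}$ lies in a unique triple $ijk_0$ of $S$ (with $k_0\notin\{i,j\}$), so exactly the vertices of $V_{k_0}$ fail, and the codegree is $n - 2 - |V_{k_0}|$. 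As the partition is balanced, $|V_\ell| = \tfrac{n}{s-2} + O(1)$ for every $\ell$, so in both cases the codegree equals $\bigl(1-\tfrac{1}{s-2}\bigr)n + O(1)$, with the implied constant uniform over all pairs; in particular $\delta_2(G_S) = \bigl(1-\tfrac{1}{s-2}\bigr)n + O(1)$.

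For $K_s$-freeness, I would argue by contradiction: suppose a set $W$ of $s$ vertices spans a copy of $K_s$ in $G_S$. First, no part $V_i$ can contain three vertices of $W$, since such a triple is of type $V_iV_iV_i$ and is not an edge. Second, let $P \subseteq [s-2]$ be the set of indices $i$ with $V_i \cap W \neq \emptyset$; I claim $P$ contains no triple of $S$. Indeed, if $ijk\in S$ with $i,j,k\in P$, choose $x\in V_i\cap W$, $y\in V_j\cap W$, $z\in V_k\cap W$; these are three distinct vertices of $W$, yet $xyz$ is of type $V_iV_jV_k$ with $ijk\in S$, hence not an edge, contradicting that $W$ spans $K_s$. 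Since every part meeting $W$ contains at most two vertices of $W$, we obtain $s = |W| \le 2|P|$.

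It remains to bound $|P|$, and this is the crux of the argument, resting on the fact that any independent set $A$ (one containing no triple) in a Steiner triple system on $v$ points satisfies $|A| \le \tfrac{v+1}{2}$. To prove this, write $B$ for the complement of $A$, $a=|A|$, $b=|B|$; every triple of $S$ meeting $A$ in two points has its third point in $B$ by independence, so there are exactly $\binom{a}{2}$ such triples, and each covers two $A$--$B$ pairs, while every other triple covering an $A$--$B$ pair meets $A$ once and $B$ twice and again covers two such pairs; counting the $ab$ pairs between $A$ and $B$ in these two ways shows the number of triples of the latter type equals $\tfrac12 a(b-a+1)$, which is nonnegative, forcing $a \le b+1$, i.e. $2a \le v+1$. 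Applying this with $v = s-2$ (a Steiner triple system on $s-2$ points exists since $s\equiv 3$ or $5 \pmod 6$, so $s-2\equiv 1$ or $3\pmod 6$) gives $|P| \le \tfrac{s-1}{2}$, whence $s \le 2|P| \le s-1$, a contradiction; hence $G_S$ is $K_s$-free. The only genuinely non-routine ingredient is this independent-set bound for Steiner triple systems — a short double count — whereas the codegree computation and the reduction of $K_s$-freeness to that bound are elementary case analyses.
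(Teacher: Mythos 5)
Your proof is correct and follows the same overall strategy as the paper: the codegree case analysis is essentially identical, and the $K_s$-freeness argument reduces in both cases to showing that an independent set in a Steiner triple system on $v$ points has at most $(v+1)/2$ elements (applied with $v=s-2$, against the lower bound of $\lceil s/2\rceil$ forced by the ``at most two per part'' observation). The one genuine point of divergence is how that independence bound is established. The paper fixes a single vertex $x_1$ in the purported independent set $X$ and notes that for each other $x_i\in X$, the third point $y_i$ of the unique block through $\{x_1,x_i\}$ lies outside $X$ and that the $y_i$ are pairwise distinct, which immediately gives $|X|-1\leq v-|X|$. Your argument instead double-counts the $ab$ cross pairs between $X$ and its complement, extracting the same inequality from the nonnegativity of the number of blocks meeting $X$ in exactly one point. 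Both are short and correct; the paper's local count is marginally more economical, while yours isolates the independence bound as a clean stand-alone lemma about Steiner systems, which reads a bit more modularly.
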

\begin{proof}
First of all let us establish $G_S$ is $K_s$-free. Note that as we have no 3-edges of the form $V_iV_iV_i$, any $s$-set of vertices meeting some part $V_i$ in at least 3 vertices cannot span a $K_s$. Now any $s$-set meeting no part in more than 2 vertices must meet at least $\lceil s/2\rceil$ parts.

We claim that every set of at least $\lceil s/2\rceil$ vertices from $[s-2]$ must span at least one 3-edge of $S$. Indeed label such a set as $X=\{x_1, x_2, \ldots, x_{\lceil s/2\rceil}\}$. Suppose for contradiction that $X$ is an independent set in $S$. Then since $S$ is a Steiner triple system, for each of $x_i$, $i=2, \ldots \lceil s/2\rceil$, there exists a unique $y_i$ such that $x_1x_iy_i$ is a 3-edge of $S$, and moreover these $y_i$ are distinct (else the pair $x_1y_i$ would be contained in more than one 3-edge). Thus we would need 
\begin{align*}
\left\lceil \frac{s}{2}\right\rceil-1 &\leq \left\vert [s-2]\setminus X \right\vert=s- \left\lceil \frac{s}{2}\right\rceil -2,
\end{align*}
a contradiction.

Thus any $s$-set of vertices meeting at least $\lceil s/2\rceil$ different parts $V_i$ must meet three distinct part $V_{i},V_{j},V_{k}$ such that $iijk$ is a 3-edge of $S$. By construction, no triple of the form $V_{i}V_{j}V_{k}$ is a 3-edge of $G_S$, and thus our $s$-set misses at least one 3-edge. It follows that $G_S$ is $K_s$-free, as claimed.

Now let us compute its codegree. Consider two vertices $v,v'$ of $G_S$. If they belong to the same part $V_i$, then for every vertex $w \notin V_i$, $vv'w$ is a 3-edge of $G_S$, and thus the codegree of $v$ and $v'$ is at least $(1-\frac{1}{s-2})n+O(1)$ (since our partition was balanced).

On the other hand suppose that $v \in V_i$ and $v'\in V_{i'}$ for some distinct $i,i'\in [s-2]$. Then for any $w\in V_i \sqcup V_{i'}$, $vv'w$ is a 3-edge of $G_S$. In addition, let us denote by $i''$ the unique member of $[s-2]$ such that $ii'i''$ is a 3-edge of the Steiner triple system $S$. Then for all $j \in [s-2]\setminus\{i,i',i''\}$ and all $w \in V_j$, $vv'w$ is a 3-edge of $G_S$. Thus the codegree of $v,v'$ is again at least $(1-\frac{1}{s-2})n+O(1)$.
\end{proof}

\begin{remark}
The construction above based on Steiner triple system gives a rather large number of non-isomorphic constructions: while the Fano plane is the unique (up to isomorphism) Steiner triple system on 7 vertices, there are for example 11,084,874,829 non-isomorphic Steiner triple systems on 19 vertices (see Section~4.5 in~\cite{ColbournDinitz10}).
\end{remark}

For $s=6$, we can give another construction based on random colourings and the Ramsey number of the triangle.
\begin{construction}[A Ramsey-based construction]\label{ramseyconstr}
Let $n\in \mathbb{N}$. Given a colouring $c:\ [n]^{(2)}\rightarrow\{1,2\}$, let $G_c$ be the 3-graph on $[n]$ with 3-edges consisting of all triples not inducing a monochromatic triangle with respect to $c$.
\end{construction}
Since the Ramsey number for monochromatic triangles in 2-coloured graphs is $R(3,3)=6$, every 6-set of vertices in $G_c$ must be missing at least one 3-edge, so that $G_c$ is $K_6$-free as required. The expected codegree of a pair of vertices in a typical colouring $c$ is $\frac{3}{4}(n-1)$. Applying the same probabilistic tools as in Theorem~\ref{codegreebound}, we can easily recover from this another proof of $\gamma(K_6)\geq 3/4$.

\begin{remark}
Typical instances of Construction~\ref{ramseyconstr} and Construction~\ref{colourconstr} with $s=4$ are genuinely different. Indeed let $c$ and $c'$ be respectively a 2- and a 4-colouring of $[n]^{(2)}$, chosen uniformly at random. Consider a 5-set $U=\{u_1,u_2, u_3, u_4, u_5\}$ from $[n]$ with $1\leq u_1 < u_2 <\ldots <u_5\leq n$.

The probability that $U$ induces a copy of $K_5$ in $G_c$ is the probability that $c$ decomposes the pairs from $U$ into two monochromatic 5-cycles, one with colour 1 and the other with colour 2. This occurs with probability 
\[\#\{\textrm{decompositions}\}\times 2^{-10}=3 \times 2^{-7}.\]
On the other hand for $i=1,2,3$, let $A_i$ be the event that for every $j,j': \ i<j<j'\leq 5$, $c'$ assigns different colours to $u_iu_j$ and $u_iu_{j'}$; this is exactly the event that all 3-edges of the form $u_iu_ju_{j'}$ with $i<j<j'$ are in the 3-graph $G_{c'}$ obtained from $c'$ by applying Construction~\ref{colourconstr}.

Observe that the characteristic functions of the $A_i$ events form a family of independent random variables, since they depend on the colours assigned by $c'$ to disjoint edge-sets, and that the probability that $U$ induces a copy of $K_5$ in $G_{c'}$ is exactly the probability that $\bigcap_{i=1}^3 A_i$ occurs. This probability is thus
\begin{align*}
\mathbb{P}(A_1)\mathbb{P}(A_2)\mathbb{P}(A_3)&= \frac{4!}{4^4} \frac{4!/1!}{4^3}\frac{4!/2!}{4^2}\\
&= \frac{3^3}{2^{10}}.
\end{align*}
It follows in particular that $K_5$ subgraphs have different frequencies in $G_c$ and $G_{c'}$ for typical $c$, $c'$. The lower-bound constructions for $\gamma(K_6)$ arising from Construction~\ref{colourconstr} and 
Construction~\ref{ramseyconstr} are thus genuinely different.
\end{remark}

An obvious question to ask is whether the lower bound we provide is tight.
\begin{question}\label{areboundstight}
Is $\gamma(K_s)=1-\frac{1}{s-2}$ for all $s\geq 4$?
\end{question}
For comparison, let us note that the Tur\'an density of $K_s$ is conjectured to be $1-\frac{4}{(s-1)^2}$, with many known non-isomorphic constructions attaining that bound~\cite{Keevash11}. However the standard examples contain pairs of vertices with codegree only $1- \frac{2}{s-1}$ proportion of the maximum possible.

It would also be interesting to know whether the Czygrinow--Nagle construction is the only example of a $K_4$-free 3-graph with codegree density $1/2+o(1)$.
\begin{question}
Are all $K_4$-free configurations with codegree density $1/2+o(1)$ `close' to a Czygrinow--Nagle construction?
\end{question}

\section{Suspensions of complete 2-graphs}

Let $G$ be a 3-graph. Given a vertex $x\in V(G)$, we may form a 2-graph in a natural way by considering the pairs of vertices $v,v'$ making a 3-edge with $x$ in $G$.
\begin{definition}
The \emph{link graph} of $x\in V(G)$ is the 2-graph
\[G_x=\left(V(G)\setminus \{x\}, \{vv': \ xvv' \in E(G)\} \right).\]
\end{definition}
For every 2-graph $H$ we can consider the 3-graph corresponding to the presence of an $H$-subgraph in a link-graph.
\begin{definition}
Given a 2-graph $H$, let $S(H)$ denote the \emph{suspension} of $H$, that is, the 3-graph with vertex set $V(H)\sqcup\{x_{\star}\}$ and 3-edges 
\[\{x_{\star}vv': \ vv' \in E(H)\}.\]
\end{definition}
So for example the complete 3-graph on 4 vertices with one 3-edge removed, $K_4^-$, may be thought of as the suspension $S(K_3^{(2)})$ of the ordinary triangle $K_3^{(2)}$.

As mentioned in the introduction, Pikhurko, Vaughan and the author~\cite{FalgasRavryPikhurkoVaughan13} recently showed $\gamma(K_4^-)=1/4$, with the extremal configuration obtained by taking as the 3-edges the oriented triangles in a random tournament. It is rather natural to ask what $\gamma(S(K_s^{(2)}))$ may be in general, where $K_s^{(2)}$ is the complete graph on $s$ vertices, 
\[K_s^{(2)}=([s], \{ij:  1\leq i <j \leq s\}).\]
\begin{problem}\label{suspensionprob}
Give bounds for $\gamma(S(K_s^{(2)}))$.
\end{problem}
Note that by Tur\'an's theorem~\cite{Turan41} and averaging we have $\gamma(S(K_s^{(2)}))\leq 1- \frac{1}{s-1}$. We give below a construction (which we do not believe is sharp in general) which shows that this trivial upper bound is not off by more than a multiplicative factor of $1-\frac{1}{s-2}$.
\begin{construction}[Rainbow triangles]\label{rainbowconstr}
Let $c: [n]^{(2)}\rightarrow [s-1]$ be a colouring of the edges of the complete 2-graph on $[n]$ with $s-1$ colours. We construct a 3-graph $G_c$ based on this colouring in the following manner: for every triple $i,j,k\in [n]$, add the 3-edge $ijk$ to $E(G_c)$ if and only if $c(ij), c(ik), c(jk)$ are pairwise distinct --- that is, if each of the sides of the triangle $ijk$ receives a different colour. We call such triangles `rainbow'.
\end{construction}
\begin{proposition}
For every colouring $c$, the 3-graph $G_c$ is $S(K_s^{(2)})$-free.
\end{proposition}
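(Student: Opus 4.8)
The plan is to argue by contradiction using a pigeonhole argument on the colours incident to a single vertex. Suppose that $G_c$ contains a copy of $S(K_s^{(2)})$. Since the vertex $x_\star$ lies in \emph{every} 3-edge of $S(K_s^{(2)})$, any embedding of $S(K_s^{(2)})$ into $G_c$ singles out a vertex $u \in [n]$ playing the role of $x_\star$, together with a set of $s$ further vertices $v_1, v_2, \dots, v_s \in [n]\setminus\{u\}$, such that the triple $uv_iv_j$ belongs to $E(G_c)$ for every pair $1 \le i < j \le s$. Equivalently, the link graph $(G_c)_u$ contains a copy of $K_s^{(2)}$ on $\{v_1, \dots, v_s\}$.

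Next I would unpack what membership in $E(G_c)$ means under Construction~\ref{rainbowconstr}: the triple $uv_iv_j$ is a 3-edge of $G_c$ precisely when the three colours $c(uv_i)$, $c(uv_j)$ and $c(v_iv_j)$ are pairwise distinct. We do not need the full strength of this; it suffices to retain the single consequence that $c(uv_i) \neq c(uv_j)$ for all $i \neq j$. In other words, the star edges $uv_1, uv_2, \dots, uv_s$ must all receive pairwise distinct colours.

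Finally, since $c$ takes values in the palette $[s-1]$ of only $s-1$ colours, the pigeonhole principle forces two of the $s$ edges $uv_1, \dots, uv_s$ to share a colour, contradicting the previous paragraph. Hence no such $u, v_1, \dots, v_s$ can exist, and $G_c$ is $S(K_s^{(2)})$-free. Viewed structurally, the construction is rigged precisely so that each link graph $(G_c)_u$ is a subgraph of the ``distinct-colour'' graph on $[n]\setminus\{u\}$ induced by the star-colours $v \mapsto c(uv)$, and such a graph has clique number at most $s-1$.

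I do not expect a genuine obstacle here: the only point requiring a moment's care is noting that $x_\star$ must be mapped to an actual vertex of $[n]$ and that we may forget about the colour $c(v_iv_j)$, keeping only the pairwise-distinctness of the $c(uv_i)$, after which the colour count is immediate.
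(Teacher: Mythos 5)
Your argument is correct and is essentially the paper's own: in both cases one pigeonholes the $s$ star-colours $c(uv_1),\dots,c(uv_s)$ around the apex vertex into the $s-1$ available colours, finds a monochromatic pair, and observes this kills rainbowness of the corresponding triangle. The only difference is presentational (you phrase it as a proof by contradiction, the paper runs it directly over an arbitrary $(s+1)$-set), so there is nothing substantive to add.
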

\begin{proof}
Let $v_0, v_1, \ldots, v_s$ be an $s+1$-set of vertices from $[n]$. Then by the pigeon-hole principle, there exist $1<i<j\leq s$ such that $c(v_0v_i)=c(v_0v_j)$. It follows that the triangle $v_0v_iv_j$ is not rainbow, and hence that $v_1, v_2\ldots v_s$ do not induce a copy of $K_s^{(2)}$ in the link graph of $v_0$ in $G_c$. Since $v_0, v_1, \ldots, v_s$ was arbitrary, it follows that $G_c$ is $SK_s^{(2)}$-free, as claimed.
\end{proof}
\begin{corollary}
\[\gamma(S(K_s^{(2)}))\geq (1-\frac{1}{s-1})(1-\frac{2}{s-1}).\]
\end{corollary}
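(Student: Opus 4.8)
The plan is to run the same probabilistic argument as in the proof of Theorem~\ref{codegreebound}. Fix $n$ large, colour each pair from $[n]$ independently and uniformly at random with a colour from $[s-1]$, and let $\mathbf{c}$ be the resulting colouring. By the Proposition above, the 3-graph $G_{\mathbf{c}}$ obtained from $\mathbf{c}$ via Construction~\ref{rainbowconstr} is $S(K_s^{(2)})$-free, so it suffices to show that with high probability
\[
\delta_2(G_{\mathbf{c}}) \;=\; \left(1-\frac{1}{s-1}\right)\left(1-\frac{2}{s-1}\right)n + o(n).
\]

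First I would compute the probability $p$ that a fixed triple $ijk$ is a 3-edge of $G_{\mathbf{c}}$. Conditioning on the colour $\mathbf{c}(ij)$, the triple $ijk$ lies in $E(G_{\mathbf{c}})$ precisely when $\mathbf{c}(ik)$ avoids $\mathbf{c}(ij)$ (probability $\tfrac{s-2}{s-1}$) and $\mathbf{c}(jk)$ avoids both $\mathbf{c}(ij)$ and $\mathbf{c}(ik)$ (probability $\tfrac{s-3}{s-1}$), giving
\[
p \;=\; \frac{(s-2)(s-3)}{(s-1)^2} \;=\; \left(1-\frac{1}{s-1}\right)\left(1-\frac{2}{s-1}\right).
\]
Next, for a fixed pair $ij$ let $X_{ij,k}$ be the indicator of the event $ijk \in E(G_{\mathbf{c}})$. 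I claim that $\{X_{ij,k}: k \in [n]\setminus\{ij\}\}$ is a family of independent $\mathrm{Bernoulli}(p)$ random variables; this is proved word for word as the Claim inside the proof of Theorem~\ref{codegreebound}. The only two facts needed are that, conditional on $\mathbf{c}(ij)$, the variable $X_{ij,k}$ depends only on the colours $\mathbf{c}(ik),\mathbf{c}(jk)$ (which are disjoint edge sets as $k$ varies), and that by the symmetry of the $s-1$ colours this conditional distribution does not depend on the value of $\mathbf{c}(ij)$.

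Finally, I would conclude via a Chernoff bound together with a union bound over the $\binom{n}{2}$ pairs, exactly as in Theorem~\ref{codegreebound}: for any fixed $\varepsilon>0$,
\[
\mathbb{P}\!\left(\delta_2(G_{\mathbf{c}}) \le (p-\varepsilon)n\right) \;\le\; \binom{n}{2}\, e^{-\frac{\varepsilon^2}{2}n} \;=\; o(1),
\]
so a typical colouring $\mathbf{c}$ yields an $S(K_s^{(2)})$-free 3-graph with $\delta_2(G_{\mathbf{c}}) \ge (p-\varepsilon)n$, whence $\textrm{coex}(n, S(K_s^{(2)})) \ge (p-\varepsilon)n$ for all large $n$. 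Letting $n\to\infty$ and then $\varepsilon\to 0$ gives $\gamma(S(K_s^{(2)})) \ge p$, which is the claimed bound. Since every step is a verbatim repetition of the argument already carried out for Theorem~\ref{codegreebound}, there is no real obstacle; the only point requiring a little care is the computation of $p$, i.e. verifying that a uniformly random $(s-1)$-colouring makes a given triangle rainbow with probability exactly $\left(1-\frac{1}{s-1}\right)\left(1-\frac{2}{s-1}\right)$.
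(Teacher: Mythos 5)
Your proposal is correct and follows exactly the route the paper takes: sample a uniformly random $(s-1)$-colouring, observe that the probability a fixed triangle is rainbow is $\tfrac{(s-2)(s-3)}{(s-1)^2}=(1-\tfrac{1}{s-1})(1-\tfrac{2}{s-1})$, and then transfer the independence-plus-Chernoff concentration argument from Theorem~\ref{codegreebound} (the paper compresses this to ``applying the same probabilistic tools,'' which you spell out). The one small point worth noting is that the independence claim is not \emph{verbatim} the same as in Theorem~\ref{codegreebound}: there $X_{ij,k}$ given $\mathbf{c}(ij)$ depends on a single extra colour, whereas here it depends on both $\mathbf{c}(ik)$ and $\mathbf{c}(jk)$; you correctly observe that these pairs are still disjoint across distinct $k$, so the argument goes through unchanged.
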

\begin{proof}
Picking $c$ uniformly at random and applying Construction~\ref{rainbowconstr}, we have that the expected codegree of any pair of vertices in $G_c$ is $(1-\frac{1}{s-1})(1-\frac{2}{s-1})(n-2)$. Applying the same probabilistic tools as in Theorem~\ref{codegreebound}, we obtain from this a proof that $\gamma(S(K_s^{(2)}))\geq (1-\frac{1}{s-1})(1-\frac{2}{s-1})$.
\end{proof}
\begin{remark}
Since $\gamma(S(K_3^{(2)}))=\gamma(K_4^-)=\frac{1}{4}$, we know this bound on $\gamma(S(K_s^{(2)}))$ fails to be sharp for $s=3,4$. Given this, it seems unlikely that this construction is sharp for $s\geq 5$.
\end{remark}

We note that the analogue of Problem~\ref{suspensionprob} for Tur\'an density is also open.
\begin{problem}
Give bounds for $\pi(S(K_s^{(2)}))$.
\end{problem}
The Tur\'an density of $S(K_3^{(2)})=K_4^-$ and $S(K_4^{(2)})$ are conjectured to be $2/7$ and $1/2$ respectively, with the lower-bounds coming from recursive constructions due to Frankl and F\"uredi~\cite{FranklFuredi84} and Bollob\'as, Leader and Malvenuto~\cite{BollobasLeaderMalvenuto11} respectively. Close to matching upper bounds were obtained using flag algebras by Vaughan and the author~\cite{FalgasRavryVaughan13}, suggesting the lower bounds are best possible.

Below we give a generalisation of Bollob\'as, Leader and Malvenuto's construction for all integers $s\geq 4$ which are not divisible by $3$, which we conjecture is best possible.
\begin{construction}[Iterated complements of Steiner triple systems]\label{iteratedsteiner}
Let $s\geq 2$ be an integer congruent to $1$ or $2$ modulo $3$. Then $2s-1$ is congruent to $1$ or $3$ modulo $6$. Let $S$ be a Steiner triple system on $[2s-1]$ --- such a system is known to always exist, subject to the aforementioned modulo $6$ condition~\cite{Kirkman1847}.

Given $n \in \mathbb{N}$, we construct a 3-graph $G_S$ in an iterated fashion as follows. First of all, take a balance partition of $[n]$ into $2s-1$ parts $V_1, V_2 \ldots V_{2s-1}$. Now take as 3-edges all triples of the form $V_{i}V_{j}V_{k}$ with $1\leq i<j<k\leq 2s-1$ and $ijk \notin E(S)$. (This is equivalent to taking a blow-up of the complement of $S$.) Finally, repeat this construction inside each of the $2s-1$ parts $V_1, V_2 \ldots V_{2s-1}$.
\end{construction}
\begin{proposition}
The 3-graph $G_s$ is $S(K_s^{(2)})$-free and contains $\left(1-\frac{2}{s}\right)\binom{n}{3}+O(n^2)$ 3-edges.
\end{proposition}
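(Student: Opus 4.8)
The plan is to verify the two claims separately: the $S(K_s^{(2)})$-freeness of $G_S$ and the edge count. For the first part, I would argue by contradiction. Suppose some set $\{v_0, v_1, \ldots, v_s\}$ induces a copy of $S(K_s^{(2)})$ in $G_S$, with apex $v_0$, so that every triple $v_0 v_a v_b$ ($1 \leq a < b \leq s$) is a 3-edge. The key observation is that the link graph of any vertex $v_0$ lying in part $V_i$ decomposes according to the partition: two vertices $w, w'$ outside $V_i$ with $w \in V_j$, $w' \in V_k$ (possibly $j = k$) satisfy $v_0 w w' \in E(G_S)$ only if $i, j, k$ are not all distinct or $ijk \notin E(S)$; and if $w, w' \in V_i$ then we recurse into the sub-construction on $V_i$. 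I would first handle the case where many of the $v_a$ lie outside the part of $v_0$: projecting these vertices to their parts, the pairs among them that make 3-edges with $v_0$ force the corresponding triples (together with $i$) to avoid $S$, which by the Steiner property of $S$ on $[2s-1]$ is impossible once we have $\lceil \text{(something)} \rceil$ distinct parts — essentially the same pigeonhole-on-a-Steiner-system argument used in the $K_s$-free construction earlier, where a large independent set in a Steiner triple system is ruled out by a counting bound. The recursive nesting is handled by induction on $n$: if too many $v_a$ fall inside a single part $V_i$, they (together with $v_0$, if $v_0 \in V_i$) induce the forbidden configuration in the smaller copy of $G_S$ on $V_i$.

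For the edge count, I would set up a recursion. Let $f(n)$ denote the number of 3-edges of $G_S$ on $n$ vertices. The triples split into three types: those meeting three distinct parts $V_i, V_j, V_k$, those meeting exactly two parts, and those inside a single part. Triples hitting three distinct parts contribute roughly the number of non-edges of $S$, scaled by $(n/(2s-1))^3$ per part-triple; since $S$ has $\binom{2s-1}{2}/3 = (2s-1)(s-1)/3$ edges out of $\binom{2s-1}{3}$ triples, the fraction of admissible part-triples is $1 - \frac{(2s-1)(s-1)/3}{\binom{2s-1}{3}} = 1 - \frac{2}{2s-3} \cdot \text{(adjust)}$ — I would compute this cleanly to confirm it matches. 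Triples meeting exactly two parts are $O(n^2)$ in number, hence absorbed into the error term, and are simply dropped (none of them are 3-edges under the stated construction). The within-part triples give $(2s-1) f(n/(2s-1)) + O(n^2)$. This yields $f(n) = c \binom{n}{3} + (2s-1)\left(\frac{1}{2s-1}\right)^3 \cdot \text{stuff}$; solving the geometric recursion $f(n) = \alpha \binom{n}{3}(1 + (2s-1)^{-2} + (2s-1)^{-4} + \cdots) + O(n^2)$ and checking that the resulting constant equals $1 - \frac{2}{s}$ is the computational heart of the argument.

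The main obstacle I anticipate is getting the freeness argument for the iterated construction fully airtight: one must carefully combine the pigeonhole bound at the top level (handling the $v_a$ spread across distinct parts) with the inductive step (handling $v_a$ concentrated in one part), and in particular deal cleanly with the borderline case where the apex $v_0$ and some of the $v_a$ lie in the same part while others lie outside. The link-graph decomposition makes this manageable — inside $V_i$ the link of $v_0$ is exactly the link of $v_0$ in the smaller $G_S$, while outside $V_i$ it is a blow-up of (part of) the complement of $S$ — but one needs to be sure the counting inequality $\lceil s/2 \rceil - 1 \leq (2s-1) - \lceil s/2 \rceil - \text{(terms)}$ actually closes with the parameter $2s-1$ rather than $s-2$, which is the one genuinely new piece of arithmetic compared to the earlier proposition. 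The edge-count recursion, by contrast, is routine once set up correctly.
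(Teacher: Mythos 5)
Your overall plan matches the paper's: argue $S(K_s^{(2)})$-freeness by contradiction using the link-graph structure (plus an implicit induction to handle the fully-nested case), and compute the edge count via the geometric series $\sum_k (2s-1)^{-2k}$. The edge-count part is correct in spirit and matches the paper's computation, though you leave the arithmetic unfinished; carried through it does give $\frac{2s-4}{2s} = 1-\frac{2}{s}$.

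The freeness argument, however, has a genuine gap. You propose to reuse the lemma from the earlier $K_s$-free proposition (``a Steiner triple system on $[s-2]$ has no independent set of size $\lceil s/2\rceil$''), but that is not the right tool here. What you actually need is the following: fix the part $V_i$ containing $v_0$; the condition that $v_0v_av_b$ be a $3$-edge for $v_a\in V_j$, $v_b\in V_k$ with $i,j,k$ distinct is that $ijk\notin E(S)$. So the relevant structure is not an independent set in $S$ but an independent set in the \emph{link of $i$ in $S$}. In a Steiner triple system on $2s-1$ points this link is a perfect matching on $[2s-1]\setminus\{i\}$, whose independence number is exactly $s-1$. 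Since the $s$ vertices $v_1,\dots,v_s$ must fall into $s$ distinct parts (repeated parts immediately kill a $3$-edge of type $V_jV_jV_i$), and no two of those parts may be matched in the link, we need an independent set of size $s$ in a matching on $2s-2$ points --- impossible. This is the one piece of arithmetic you were unsure about, and the inequality that ``closes'' is simply $s>s-1$, not anything involving $\lceil s/2\rceil$. Also, the ``borderline case'' you flag (some $v_a$ in $v_0$'s part, others outside) is not a delicate case at all: if $v_a\in V_i$ and $v_b\notin V_i$, then $v_0v_av_b$ has type $V_iV_iV_j$ and is never an edge, so one either has all of $v_0,\dots,v_s$ inside one part (and recurses) or none of $v_1,\dots,v_s$ in $v_0$'s part (and applies the matching argument above). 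Once you replace the borrowed lemma with the matching observation, the proof closes.
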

\begin{proof}
Suppose for contradiction that we have an $s+1$-set of vertices $v_0, v_1 \ldots v_{s}$ in $G_S$ such that $v_1, \ldots v_{s}$ span a copy of $K_s^{(2)}$ in the link-graph of $v_0$.

Without loss of generality we may assume that $v_0\in V_{2s-1}$. We may also assume that at least one vertex $v_i$, $1\leq i \leq s$, lies in a different part from $v_0$ --- without loss of generality let us say we have $v_1 \in V_1$.

Since $S$ is a Steiner triple system, for every $i\in [2s-2]$, we have a unique $j \in [2s-2]\setminus\{j\}$ such that $ij(2s-1)$ is a 3-edge of $S$. This defines a matching on $[2s-2]$; by relabelling the parts if necessary, we may assume that this matching is $i(s-1+i): \ i \in [s-1]$.

Thus if $u\in V_i$ and $u'\in V_{s-1+i}$ then $uu'v_0$ it not a 3-edge of $G_S$. By construction, if $u \in V_{2s-1}$ then $uv_0v_1$ is not a 3-edge of $G_S$; also, if $u,u'$ lie in the same part $V_i$, $i \in [2s-2]$, then $uu'v_0$ is not a 3-edge of $G_S$.

We must thus have $v_1, v_2 \ldots v_{s}$ lie in $s$ distinct parts from $V_1,V_2, \ldots V_{2s-2}$ (or else we do not have a copy of $S(K_s^{(2)})$. But placing a vertex inside part $V_i$ for some $1\leq i \leq (s-1)$ forbids us from placing any vertex in part $V_{s-1+i}$, and vice-versa. Since $2\times s-1>\left\vert[2s-2]\right\vert=2s-2$, this contradicts the pigeon-hole principle. The 3-graph $G_S$ is thus $S(K_s^{(2)})$-free, as claimed.

Now the number of 3-edges contained in $G_S$ is
\begin{align*}
\vert E(G_S)\vert &= \left(\binom{2s-1}{3}-\vert E(S)\vert\right) \left(\frac{n}{2s-1}\right)^3\times \left(1+ (2s-1)\times \frac{1}{(2s-1)^3}\right.\\
&\left. \qquad  +(2s-1)^2\times \frac{1}{(2s-1)^6} + \cdots\right) +O(n^2)\\
&= \left(\frac{2s-4}{2s}+o(1)\right)\binom{n}{3},
\end{align*}
as required.
\end{proof}
\begin{corollary}\label{Turandensitysuspension}
Let $s\in \mathbb{N}$ be congruent to $1$ or $2$ modulo $3$. Then
\[\pi(S(K_s^{(2)}))\geq 1- \frac{2}{s}.\]
\end{corollary}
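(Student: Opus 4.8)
The plan is to derive Corollary~\ref{Turandensitysuspension} directly from the Proposition immediately preceding it, exactly as Theorem~\ref{codegreebound} was derived from Proposition~\ref{constructionworks}, but now at the level of edge density rather than codegree. First I would recall that the Tur\'an density $\pi(\mathcal{F})$ is the limit of $\textrm{ex}(n,\mathcal{F})/\binom{n}{3}$, and that to bound it below it suffices to exhibit, for each $n$, a single $\mathcal{F}$-free $3$-graph on $n$ vertices with the required number of edges. The Proposition already does this: for $s$ congruent to $1$ or $2$ modulo $3$, the iterated construction $G_S$ of Construction~\ref{iteratedsteiner} is $S(K_s^{(2)})$-free and has $\left(1-\frac{2}{s}\right)\binom{n}{3}+O(n^2)$ edges.

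The argument is then a two-line limit computation. For every $n$ we have a valid construction, so $\textrm{ex}(n,S(K_s^{(2)}))\geq |E(G_S)| = \left(1-\frac{2}{s}\right)\binom{n}{3}+O(n^2)$; dividing by $\binom{n}{3}$ and letting $n\to\infty$, the $O(n^2)/\binom{n}{3}=O(1/n)$ error term vanishes, leaving $\pi(S(K_s^{(2)}))\geq 1-\frac{2}{s}$. One should also note the hypothesis is inherited unchanged: $s$ congruent to $1$ or $2$ modulo $3$ is precisely the condition guaranteeing $2s-1\equiv 1$ or $3\pmod 6$, hence the existence of the Steiner triple system on $[2s-1]$ needed for the construction, so no extra case analysis is required.

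There is essentially no obstacle here — the only thing to be careful about is bookkeeping of the error term, namely confirming that the contribution of edges not of the "blown-up complement-of-$S$" type (the recursive edges inside the parts, edges within a single part of the top-level partition, and so on) is genuinely $O(n^2)$ and does not perturb the leading constant; but this is already subsumed in the statement of the Proposition, which I am entitled to assume. So the proof of the Corollary is simply: apply the Proposition for each $n$, and pass to the limit in the definition of $\pi$.

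\begin{proof}
By hypothesis $s$ is congruent to $1$ or $2$ modulo $3$, so Construction~\ref{iteratedsteiner} applies and, by the preceding Proposition, for every $n\in\mathbb{N}$ there is an $S(K_s^{(2)})$-free $3$-graph $G_S$ on $n$ vertices with $\left(1-\frac{2}{s}\right)\binom{n}{3}+O(n^2)$ edges. Hence
\[
\textrm{ex}\!\left(n, S(K_s^{(2)})\right)\ \geq\ \left(1-\frac{2}{s}\right)\binom{n}{3}+O(n^2).
\]
Dividing by $\binom{n}{3}$ and letting $n\to\infty$, the error term is $O(1/n)=o(1)$, so
\[
\pi\!\left(S(K_s^{(2)})\right)\ =\ \lim_{n\to\infty}\frac{\textrm{ex}(n, S(K_s^{(2)}))}{\binom{n}{3}}\ \geq\ 1-\frac{2}{s},
\]
as claimed.
\end{proof}
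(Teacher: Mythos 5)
Your proof is correct and is exactly the argument the paper leaves implicit: the Corollary is stated immediately after the Proposition with no separate proof, precisely because it follows by dividing the edge count of $G_S$ by $\binom{n}{3}$ and letting $n\to\infty$. Your write-up, including the remark that the modulo-$3$ hypothesis on $s$ is what guarantees $2s-1\equiv 1$ or $3\pmod 6$ and hence the existence of the Steiner triple system, matches the intended reasoning.
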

\begin{conjecture}\label{turansuspensionconj}
The lower bound given above in Corollary~\ref{Turandensitysuspension} is sharp.
\end{conjecture}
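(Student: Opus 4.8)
\emph{Proof proposal.} The conjecture is equivalent to the upper bound $\pi(S(K_s^{(2)}))\le 1-\frac2s$, i.e.\ to showing that every $S(K_s^{(2)})$-free $3$-graph $G$ on $n$ vertices satisfies $e(G)\le\left(1-\frac2s\right)\binom n3+o(n^3)$; the matching lower bound is Corollary~\ref{Turandensitysuspension}. The one tool available in full generality is the link reformulation already used in the Proposition above: $G$ is $S(K_s^{(2)})$-free exactly when every link graph $G_x$ is $K_s^{(2)}$-free. Combining this with Tur\'an's theorem~\cite{Turan41} and the identity $\sum_{x}e(G_x)=3e(G)$ yields only the weaker bound $\pi(S(K_s^{(2)}))\le 1-\frac1{s-1}$, and the essential difficulty is already visible here: the conjectured value $1-\frac2s$ lies \emph{strictly below} the Tur\'an threshold $1-\frac1{s-1}$, by the fixed positive amount $\frac{s-2}{s(s-1)}$. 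Hence the links in the extremal construction are far from $K_s^{(2)}$-Tur\'an-extremal, and no link-by-link optimisation can give the right constant.

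The plan therefore has two tiers. For the smallest cases one already has flag algebra bounds very close to, but not matching, $1-\frac2s$ (see~\cite{FalgasRavryVaughan13} for $s=3,4$), and I would first try to push these to equality for $s=4,5$ --- where $2s-1\in\{7,9\}$ and the complement of the Steiner triple system (the Fano plane, resp.\ the affine plane $AG(2,3)$) is unique --- by refining the flag algebra certificate. For general $s$ the approach is a regularity/removal reduction followed by a recursive optimisation: pass to a bounded-size weighted ``reduced'' $3$-graph $R$ that is approximately $S(K_s^{(2)})$-free, bound the density of $G$ by the best iterated-blow-up density realisable by such an $R$, and then prove the finite extremal fact that the iterated blow-up of $\overline S$ maximises this quantity, with value $1-\frac2s$ (the geometric series evaluated in the Proposition). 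The heart of this last step is a purely extremal statement about $3$-graphs on at most $2s-1$ vertices, where one combines the vertex-local $K_s^{(2)}$-freeness of links with the pigeonhole obstruction from the Proposition --- that $\lceil s/2\rceil$ blow-up classes already force a suspension unless the forbidden triples form a Steiner system --- to identify $\overline S$ as the unique (up to the choice of Steiner system) extremal pattern.

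The main obstacle is precisely the passage from the local link condition to global structure. Since the extremal links sit below the Tur\'an threshold, Tur\'an stability says nothing about the link of any individual vertex in a near-extremal $G$; one must instead control the \emph{joint} behaviour of many links at once, for which there is at present no general machinery. This is why flag algebras give only strict, non-tight bounds even at $s=4$, and it parallels the long-open Frankl--F\"uredi conjecture $\pi(K_4^-)=2/7$, whose extremal example is also an iterated blow-up. A further complication is that for $s\ge 8$ the Steiner triple systems on $[2s-1]$ are wildly non-isomorphic, so the presumed extremal family is highly non-unique and any stability argument must accommodate all of them simultaneously. I therefore expect a full resolution to need genuinely new ideas; the most realistic near-term progress is a flag-algebra-assisted proof for each fixed small $s$, together with a general argument conditional on a suitable ``joint link stability'' input.
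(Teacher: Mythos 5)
This statement is a \emph{conjecture}, not a theorem: the paper offers no proof of it, so there is no argument in the paper to compare yours against. You have correctly recognised this, and your ``proposal'' is in fact an honest assessment of why the conjecture is open rather than a proof. The substantive observations you make are accurate: the link reformulation ($G$ is $S(K_s^{(2)})$-free iff every link is $K_s^{(2)}$-free) combined with Tur\'an's theorem and $\sum_x e(G_x)=3e(G)$ gives only $\pi(S(K_s^{(2)}))\le 1-\frac{1}{s-1}$, which matches the trivial upper bound the paper records just after Corollary~\ref{Turandensitysuspension}; the conjectured value $1-\frac{2}{s}$ is strictly smaller for all $s\ge 3$, so the extremal links lie strictly below the Tur\'an threshold and link-by-link stability cannot close the gap; and the non-uniqueness of Steiner triple systems on $[2s-1]$ for larger $s$ would obstruct any naive stability argument. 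Your identification of the Fano plane and $AG(2,3)$ as the unique STS on $7$ and $9$ points, and the parallel to the Frankl--F\"uredi $\pi(K_4^-)=2/7$ conjecture (also an iterated-blow-up extremum), are both correct and match the paper's own framing. In short, there is no gap to report because there is no proof to be had here yet; your write-up should simply be labelled as a discussion of the conjecture rather than a proof attempt, but its mathematical content is sound and consistent with the paper.
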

By Tur\'an's theorem and averaging, we have that 
\[\pi(S(K_s^{(2)}))\leq 1- \frac{1}{s-1}\]
for all $s \geq 2$. On the other hand Construction~\ref{iteratedsteiner} shows  that $\pi(S(K_s^{(2)}))\geq 1-\frac{1}{2s}+O\left(\frac{1}{s^2}\right)$. It seems likely that $\pi(S(K_s^{(2)}))=1-\frac{C}{s}+O\left(\frac{1}{s^2}\right)$ for some constant $C\in [1,2]$. As a first step towards a general result, it would be interesting to identify the correct value of $C$ (or indeed prove that such a constant exists!).
\begin{conjecture}[Weakening of Conjecture~\ref{turansuspensionconj}]
For every $\varepsilon>0$ there exists $s_0=s_0(\varepsilon)$ such that for all $s\geq s_0$ we have
\[\pi(S(K_s^{(2)}))\leq 1- \frac{2-\varepsilon}{s}.\]
\end{conjecture}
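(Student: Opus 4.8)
The plan is to upgrade the trivial bound coming from averaging over links by a stability-and-structure analysis. Since $G$ is $S(K_s^{(2)})$-free exactly when every link $G_x$ is $K_s$-free, we have $3|E(G)|=\sum_x e(G_x)$, and Tur\'an's theorem gives $\pi(S(K_s^{(2)}))\le 1-\tfrac1{s-1}$; what must be proved is that on average the links miss a $\tfrac{2-\varepsilon}{s}$-proportion of their pairs, i.e.\ roughly twice the Tur\'an deficiency. This cannot be done link by link, because an individual $K_s$-free link may be as dense as Tur\'an allows, and—crucially—in the conjecturally extremal Construction~\ref{iteratedsteiner} the links lie $\Omega(n^2)$ edge-edits away from any Tur\'an graph. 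So the extra deficiency has to come from the interaction between distinct links, and the core task is to describe the near-extremal configurations.

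First I would apply the Erd\H{o}s--Simonovits stability theorem: if $e(G_x)$ is within $\eta$ of the Tur\'an density then $G_x$ is within $\beta(\eta)n^2$ edge-edits of a complete $(s-1)$-partite graph with parts $P^x_1,\dots,P^x_{s-1}$. For such $x$ and a typical $y\in P^x_a$ one has $d(x,y)\approx n-|P^x_a|$; using that a pair is a non-$3$-edge symmetrically from either endpoint's link, the part of $G_y$'s partition that contains $x$ must then coincide, up to $o(n)$ vertices, with $P^x_a$. Propagating this matching forces the partitions to fit together globally, so that after cleaning $G$ is an approximate iterated blow-up of the complement of a partial Steiner system—precisely the structure of Construction~\ref{iteratedsteiner}. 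The density of any such configuration is then a convergent geometric computation as in the analysis of Construction~\ref{iteratedsteiner}, and it is maximised, at $1-\tfrac2s+o(1)$, by balancing the parts; this finishes the ``structured'' case. In the complementary case a positive proportion $\lambda$ of vertices are ``wild''—their links miss at least $(\tfrac1{s-1}+\eta)\binom{n-1}{2}$ pairs—and then the average non-edge density already exceeds $\tfrac{2-\varepsilon}{s}$ once $\lambda\eta\ge\tfrac{1-\varepsilon}{s}+O(s^{-2})$.

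The main obstacle is that, as stated, these two regimes are mutually exclusive: the structural step needs $\beta(\eta)\ll 1/s$, hence $\eta\ll 1/s^2$ since $\beta(\eta)=\Theta(\sqrt\eta)$ is sharp, but then the wild case would require $\lambda\gtrsim 1/(s\eta)\gg 1$. The reason is that near-extremal $K_s$-free graphs—and, here, the genuinely extremal links—are simply not close enough to the Tur\'an graph for off-the-shelf stability to bite. I therefore expect the real proof to replace the single stability step by a recursive one: peel off a single level of the conjectured iterated blow-up directly, bound the non-$3$-edges it contributes, and induct on $s$ (or on $n$), the delicate point being to show that making the top-level blow-up irregular cannot increase the density. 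A flag-algebra/Lagrangian computation gives an alternative route to the needed extremal bound on small $S(K_s^{(2)})$-free hypergraphs, but making it uniform in $s$ appears to require the same structural input. One exact identity the recursion can exploit, obtained by observing that $G_x$ restricted to any codegree neighbourhood $N(x,y)$ is $K_{s-1}$-free and double counting, is $12P+3Q\le\tfrac{2(s-3)}{s-2}\,C$, where $P,Q,C$ count the copies of $K_4$, the copies of $K_4^-$, and the pairs of $3$-edges sharing two vertices in $G$; on its own this is weaker than the trivial bound, but it is the type of local constraint that, iterated, should help control the per-level loss.
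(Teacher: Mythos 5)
The statement you are trying to prove is a \emph{conjecture} in the paper, explicitly labeled as a weakening of Conjecture~\ref{turansuspensionconj}, and the paper offers no proof of it. The only things the paper actually establishes here are the trivial upper bound $\pi(S(K_s^{(2)}))\le 1-\frac{1}{s-1}$ via Tur\'an's theorem and link-averaging, and the lower bound $1-\frac{2}{s}$ via Construction~\ref{iteratedsteiner}; the gap between these is exactly what is left open. So there is no proof in the paper against which to compare your attempt, and your write-up should not be presented as one.

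Your own sketch does not close the gap either, and to your credit you say so explicitly. Your diagnosis is correct: the two-regime argument (stability on near-Tur\'an links versus a positive fraction of wild vertices) is self-defeating, because with $\beta(\eta)=\Theta(\sqrt\eta)$ one needs $\eta\ll 1/s^2$ to get usable structure, while extracting an extra $\tfrac{1-\varepsilon}{s}$ of deficiency from wild vertices needs $\lambda\eta\gtrsim 1/s$ with $\lambda\le 1$ --- impossible simultaneously. The deeper point you raise is also right: the links in Construction~\ref{iteratedsteiner} are themselves $\Omega(n^2)$ edits from any Tur\'an graph, so Erd\H{o}s--Simonovits stability is simply the wrong tool to characterise them. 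Your side observation that $G_x$ restricted to $N(x,y)$ must be $K_{s-1}^{(2)}$-free is correct, and the nested-neighbourhood theme is in the spirit of the paper's Proposition~\ref{spanprob}, but you concede that the inequality you extract from it is weaker than the trivial bound, and the recursive argument you gesture at is not carried out. In short, you have a plausible and honestly-caveated research programme, not a proof; the conjecture remains open.
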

Similarly, we would like to know if 
\[\gamma(S(K_s^{(2)}))=1-\frac{C'}{s}+O\left(\frac{1}{s^2}\right)\]
for some constant $C'\in [1,3]$, and to know the value of $C'$ (if it does exist).

\section{Co-spanned complete 3-graphs}
In view of Tur\'an's theorem, there is another family of 3-graph analogues of complete 2-graphs we could want to study. Observe that a complete 2-graph on $s+1$ vertices $K_{s+1}^{(2)}$ may be viewed as a $K_s^{(2)}$ spanned by the neighbourhood of a single vertex.

By going over to 3-graphs and looking at the joint neighbourhood of a pair of vertices, we have the natural concept of a co-spanned 3-graph.
\begin{definition}
Given a $3$-graph $H$, let $F(H)$ denote the \emph{co-spanned} $H$, that is, the 3-graph with vertex set
$V(H)\sqcup\{x_{\star},y_{\star}\}$ and 3-edges
\[\{x_{\star}y_{\star}v: \ v \in V(H)\}\sqcup E(H).\]
\end{definition}
So for example the 3-graph $F_{3,2}=([5], \{123,124,125,345\})$ may be thought of as the co-spanned $K_3$.

As mentioned in the introduction Marchant, Pikhurko, Vaughan and the author~\cite{falgasravry+marchant+pikhurko+vaughan:arxiv} showed $\gamma(F_{3,2})=1/3$ and determined the extremal configurations, which are obtained by taking an almost balanced tripartition of the vertex set $\sqcup_{i=1}^3V_i$, putting in all triples of the form $V_iV_iV_{i+1}$ (winding round modulo 3) and adding a small number ($O(n^2)$) of 3-edges of the form $V_1V_2V_3$.

\begin{problem}\label{cospannedproblem}
Find a good lower bound construction for $\gamma(F(K_t))$.
\end{problem}
Clearly for any 3-graph $H$ we must have $\gamma(F(H))\geq \gamma(H)$. We also have the following upper-bound.
\begin{proposition}\label{spanprob}
\[\gamma(F(H))\leq \frac{1}{2-\gamma(H)}.\]
\end{proposition}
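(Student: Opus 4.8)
The plan is to exploit the structure of $F(H)$. Recall that $F(H)$ consists of a copy of $H$ together with two further vertices $x_\star,y_\star$ such that every vertex of $V(H)$ makes a $3$-edge with the pair $\{x_\star,y_\star\}$. Consequently, if $G$ is an $F(H)$-free $3$-graph and $x,y$ are distinct vertices of $G$, then the subgraph of $G$ induced on the joint neighbourhood $N(x,y):=\{v\in V(G): xyv\in E(G)\}$ must be $H$-free. Indeed, a copy of $H$ sitting inside $N(x,y)$ automatically avoids $x$ and $y$ (neither of which lies in its own joint neighbourhood), and so it could be extended via $x_\star\mapsto x$, $y_\star\mapsto y$ to a copy of $F(H)$ in $G$ --- a contradiction. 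This is the only structural input; everything else is counting.

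Next I would fix $\varepsilon>0$, take $n$ large, set $d:=\textrm{coex}(n,F(H))$, and fix an $n$-vertex $F(H)$-free $3$-graph $G$ with $\delta_2(G)=d$. If $d<\varepsilon n$ then trivially $d/(n-2)<2\varepsilon$, so we may assume $d\geq\varepsilon n$. Pick any distinct $x,y$ and write $A:=N(x,y)$, so $|A|=d(x,y)\geq d\geq\varepsilon n$. By the observation above, $G[A]$ is $H$-free, so by the definition of the codegree threshold $\delta_2(G[A])\leq\textrm{coex}(|A|,H)$; and since $|A|\to\infty$, this is $\bigl(\gamma(H)+o(1)\bigr)|A|$. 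Choosing a pair $u,v\in A$ with $d_{G[A]}(u,v)=\delta_2(G[A])$, we see that $u$ and $v$ have at most $\bigl(\gamma(H)+o(1)\bigr)|A|$ common neighbours that lie inside $A$.

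Finally I would compare this with the codegree of $u,v$ in the whole of $G$. Since $d(u,v)\geq d$, at least $d-\bigl(\gamma(H)+o(1)\bigr)|A|$ common neighbours of $u$ and $v$ lie outside $A$, and there are only $n-|A|$ vertices available there; hence
\[
d\ \leq\ \bigl(n-|A|\bigr)+\bigl(\gamma(H)+o(1)\bigr)|A|\ =\ n-\bigl(1-\gamma(H)-o(1)\bigr)|A|.
\]
Because $\gamma(H)\leq 1$ and $|A|\geq d$, the right-hand side is at most $n-\bigl(1-\gamma(H)-o(1)\bigr)d$, so $\bigl(2-\gamma(H)\bigr)d\leq n+o(n)$, i.e.\ $d/(n-2)\leq \bigl(2-\gamma(H)\bigr)^{-1}+o(1)$. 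Letting $n\to\infty$ and then $\varepsilon\to0$ yields $\gamma(F(H))\leq 1/(2-\gamma(H))$.

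The argument is short and I do not expect a genuine obstacle. The only point needing care is the bookkeeping of the error terms: the estimate $\textrm{coex}(m,H)=(\gamma(H)+o(1))m$ is only usable when $m\to\infty$, which is precisely why one first disposes of the range $d=o(n)$ and thereafter works with $|A|\geq d=\Theta(n)$; and the closing rearrangement relies on $1-\gamma(H)\geq 0$ in order to replace $|A|$ by the possibly smaller quantity $d$.
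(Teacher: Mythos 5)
Your argument is correct and is essentially the same as the paper's: both proofs observe that in an $F(H)$-free $3$-graph the joint neighbourhood $\Gamma(x,y)$ induces an $H$-free subgraph, and both transfer the minimum codegree condition from $G$ to $G[\Gamma(x,y)]$ to derive the bound. The only difference is presentational --- you argue directly from an $F(H)$-free $G$ attaining $\textrm{coex}(n,F(H))$ and rearrange the resulting inequality, while the paper phrases the same computation as its contrapositive (if $\delta_2(G)>\gamma n$ with $\gamma>1/(2-\gamma(H))$ then $G[\Gamma(x,y)]$ has relative codegree exceeding $\gamma(H)$ and hence contains $H$).
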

\begin{proof}
Assume that $\gamma(H)<1$, for otherwise we have nothing to prove. Let $\gamma$ be a real number with $\frac{1}{2-\gamma(H)}< \gamma <1$. Suppose $G$ is a 3-graph on $[n]$ with $\delta_2(G)\geq \gamma n$. Consider any pair $xy \in [n]^{(2)}$, and let $\Gamma(x,y)=\{z\in [n]: \ xyz \in E(G)\}$ be their joint neighbourhood.

Set $\alpha =\vert \Gamma(x,y) \vert/n$ By our codegree assumption, we have $\alpha \geq \gamma$. Also by the codegree assumption, every pair of vertices $z,z' \in \Gamma(x,y)$ must have at least $\gamma n - (1-\alpha) n$ neighbours inside $\Gamma(x,y)$.

Thus the restriction $G'=G[\Gamma(x,y)]$ of the 3-graph $G$ to the vertices in $\Gamma(x,y)$ has codegree
\begin{align*}
\frac{\delta_2(G')}{\vert V(G')\vert}&\geq 1- \frac{1-\gamma}{\alpha}\\
&\geq 1- \frac{1-\gamma}{\gamma} \qquad \textrm{since $\alpha \geq \gamma$.}
\end{align*}
Since $\gamma >\frac{1}{2-\gamma(H)}$, we have $1- \frac{1-\gamma}{\gamma}>\gamma(H)$. In particular if $n$ is large enough, $G'$ contains a copy of $H$, which, when taken with $x,y$ in $G$ gives rise to a copy of $F(H)$.

It follows that $\gamma(F(H))\leq \frac{1}{2-\gamma(H)}$ as claimed.
\end{proof}
Thus if the answer to 
Question~\ref{areboundstight} is positive (as we believe) and $\gamma(K_t)=1-1/(s-2)$ for all $s \geq 4$, then 
\[1-\frac{1}{s-2}\leq \gamma(F(K_s))\leq 1- \frac{1}{s-1}\]
for all $s\geq 3$.
\begin{question}
Is $\gamma(F(K_s))=1-\frac{1}{s}+O\left(\frac{1}{s^2}\right)$?
\end{question}

Let us finish by noting that the Tur\'an density analogue of Problem~\ref{cospannedproblem} is also open. The Tur\'an number and extremal configurations ('one-way' bipartite 3-graphs) for $F_{3,2}$ are known~\cite{FurediPikhurkoSimonovits05}, but we do not know of any constructions or conjectures for $\pi(F(K_s))$ besides the ones showing $\pi(K_s)\geq 1-\frac{4}{(s-1)^2}$.

\bibliographystyle{plain}
\bibliography{codegreebiblio}

\end{document}